\documentclass[11pt]{amsart}

\usepackage[T1]{fontenc}
\usepackage{lmodern}
\usepackage{microtype}
\usepackage{amsmath,amssymb,mathtools}
\usepackage{enumitem}
\usepackage[colorlinks=true,linkcolor=blue,citecolor=blue,urlcolor=blue]{hyperref}
\hypersetup{
  pdftitle={Annular Khovanov homology detects three-strand weaving links},
  pdfauthor={Suman Saurabh}
}

\newtheorem{theorem}{Theorem}[section]
\newtheorem{lemma}[theorem]{Lemma}
\newtheorem{corollary}[theorem]{Corollary}
\theoremstyle{remark}
\newtheorem{remark}[theorem]{Remark}

\newcommand{\F}{\mathbb F_2}
\newcommand{\AKh}{\operatorname{AKh}}
\newcommand{\tr}{\operatorname{tr}}
\newcommand{\e}{\operatorname{e}}

\title[Annular Khovanov homology and weaving links]
{Annular Khovanov homology detects three-strand weaving links}

\author{Suman Saurabh}
\email{realsumansaurabh@gmail.com}

\subjclass[2020]{Primary 57K18; Secondary 57K10, 57K14, 20F36}
\keywords{annular Khovanov homology, weaving link, closed braid, Jones polynomial, Burau representation}

\begin{document}

\begin{abstract}
For \(N\geq1\), let \(K_N\) be the annular closure of
\((\sigma_1\sigma_2^{-1})^N\).  We prove that triply graded annular Khovanov homology
over \(\mathbb F_2\) detects the underlying unoriented annular link \(K_N\).  If
\(3\nmid N\), the only ambiguity is overall orientation reversal.  If \(3\mid N\),
the only ambiguity is independent reversal of components; every such reorientation
has the same homology, so this is sharp.  The proof combines braid detection from the
extremal annular grading with a rigidity theorem: the Jones polynomial and exponent
sum determine \((\sigma_1\sigma_2^{-1})^N\) up to conjugacy in \(B_3\).
\end{abstract}

\maketitle

\section{Introduction}

An invariant \emph{detects} a link when no other link has the same value.  Such results
for Khovanov homology \cite{Khovanov} are rare and often require topology far beyond
the definition of the invariant.  Landmark examples include detection of the unknot
by Kronheimer and Mrowka \cite{KronheimerMrowka}, the trefoils by Baldwin and Sivek
\cite{BaldwinSivek}, the figure-eight knot by Baldwin, Dowlin, Levine, Lidman and
Sazdanovic \cite{BaldwinDowlinLevineLidmanSazdanovic}, and the Hopf links by Baldwin,
Sivek and Xie \cite{BaldwinSivekXie}.

For a link in a thickened annulus, annular Khovanov homology retains information about
the link's position relative to the distinguished axis.  The theory originates in work
of Asaeda, Przytycki and Sikora \cite{APS}; it is related to sutured Floer homology by
Grigsby and Wehrli \cite{GrigsbyWehrli} and carries additional representation-theoretic
structure by work of Grigsby, Licata and Wehrli \cite{GrigsbyLicataWehrli}.  Xie
constructed a spectral sequence to annular instanton Floer homology and obtained
detection results for null-homologous annular links \cite{Xie}.  Most relevant here,
the extremal annular grading distinguishes closed braids from all other annular links
by work of Grigsby and Ni \cite{GrigsbyNi}; the characteristic-two formulation used
below is due to Kim \cite{Kim}.  The relationship between extremal annular gradings and
meridional complexity is studied further by Martin \cite{Martin}.

The extra grading can therefore retain topology that disappears when the braid axis is forgotten. This suggests asking whether it can detect an entire nontrivial family of braid closures, rather than only isolated examples. The family considered below provides such an instance.

Let \(A=S^1\times[0,1]\), and regard the closure of a braid as a link in
\(A\times I\).  Write
\[
  \beta_N=(\sigma_1\sigma_2^{-1})^N\in B_3,
  \qquad K_N=\widehat{\beta_N}\subset A\times I .
\]
We give \(K_N\) its braid orientation unless explicitly stated otherwise.
The underlying link in \(S^3\) is the weaving link \(W(3,N)\), also called a
three-strand Turk's head link.  The permutation of \(\sigma_1\sigma_2^{-1}\) is a
three-cycle, so \(K_N\) is a knot precisely when \(3\nmid N\).  In particular,
\(K_2\) is the figure-eight knot with its standard braid axis.  Mishra and Staffeldt
computed the Jones polynomial and the ranks of ordinary Khovanov homology for the
knots \(W(3,N)\) \cite{MishraStaffeldt}; our concern is instead detection in the
annulus.

We use the triply graded annular Khovanov homology
\(\AKh^{i,j,k}(-;\F)\), where \(i,j,k\) denote the homological, quantum and annular
gradings.  An isomorphism below is understood to preserve all three absolute gradings.

\begin{theorem}\label{thm:main}
Let \(N\geq 1\).  If \(L\subset A\times I\) is an oriented annular link satisfying
\[
  \AKh(L;\F)\cong \AKh(K_N;\F),
\]
then the underlying unoriented link of \(L\) is ambiently isotopic in \(A\times I\)
to \(K_N\).  More precisely:
\begin{enumerate}[label=\textup{(\roman*)}]
\item if \(3\nmid N\), then \(L\) is oriented isotopic to \(K_N\) or to its overall
orientation reverse;
\item if \(3\mid N\), then \(L\) is oriented isotopic to a link obtained from \(K_N\)
by independently reversing some of its three components.  Conversely, every such
reorientation has annular Khovanov homology isomorphic to \(\AKh(K_N;\F)\), preserving
all three absolute gradings.
\end{enumerate}
\end{theorem}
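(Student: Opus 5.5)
We start from the extremal annular grading, so that \(L\) is forced to be a closed 3-braid. By Grigsby--Ni, in Kim's characteristic-two form, \(\AKh(L;\F)\) in the maximal annular grading \(k=n\) has rank one exactly when \(L\) is isotopic in \(A\times I\) to the closure of an \(n\)-braid. Since \(K_N\) is a closed 3-braid, \(\AKh(K_N;\F)\) has rank one in grading \(k=3\) and vanishes for \(|k|>3\). The same then holds for \(L\), so \(L\) is the closure of some \(\gamma\in B_3\), possibly after reversing orientation in the braid sense. The only subtlety is the choice between the braid orientation and its reverse, and this is read off from the sign of \(k\) at the extremal generator. Conjugacy classes in \(B_3\) correspond exactly to isotopy classes of closed 3-braids in \(A\times I\). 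The problem is therefore to show that \(\gamma\) is conjugate to \(\beta_N\). This only works when \(L\) carries the braid-like orientation; the other orientations need separate treatment (see below).

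Next we extract two numerical invariants of \(\gamma\) from \(\AKh\). Collapsing the \(k\)-grading gives \(\operatorname{Kh}(\widehat\gamma;\F)\), and its graded Euler characteristic recovers the Jones polynomial \(V_{\widehat\gamma}\) (over \(\F\) the Euler characteristic is still the Jones polynomial, since ranks agree mod signs in the alternating sum). The exponent sum \(\e(\gamma)\) is read from the \((i,j)\)-bigrading of the unique generator in annular grading \(3\). That generator is the all-\(v_+\) state in the oriented resolution, so its gradings are linear in the numbers of positive and negative crossings, and \(n_+-n_-=\e(\gamma)\) follows. For \(\beta_N\) this gives \(\e(\gamma)=0\) and \(V_{\widehat\gamma}=V_{W(3,N)}\).

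The heart of the argument, and the main obstacle, is the rigidity statement announced in the abstract: a 3-braid with exponent sum \(0\) and the Jones polynomial of \(W(3,N)\) is conjugate to \(\beta_N\). The plan is to use the formula for the Jones polynomial of a closed 3-braid through the reduced Burau trace,
\[
V_{\widehat\gamma}(t)=(-\sqrt t)^{\e(\gamma)}\bigl(t+t^{-1}+\tr\,\rho_t(\gamma)\bigr)
\]
up to normalization. With \(\e\) known, \(V\) then determines \(\tr\rho_t(\gamma)\). Murasugi's normal forms for \(B_3\) divide into three types: pseudo-Anosov words \(\Delta^{2d}\sigma_1^{-a_1}\sigma_2^{b_1}\cdots\), periodic classes, and reducible classes \(\Delta^{2d}\sigma_1^{m}\). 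For each type we compute the trace as a Laurent polynomial, using the fact that the images of \(\sigma_1^{-1}\) and \(\sigma_2\) are unipotent-like matrices. The aim is to show that the \(t\)-span and leading coefficients of the trace force \(d=0\) and all \(a_i=b_i=1\), and also fix the number of syllables as \(N\). Here \(\tr\rho(\beta_N)\) behaves like a Chebyshev/Lucas expression in \(t+t^{-1}\), which is the shape to match. Cyclic permutations of the word are conjugations, so the result is \(\beta_N\). The exponent sum is what rules out the central twist \(\Delta^{2d}\). Controlling cancellations in the trace across the different types is where I expect most of the work, probably by taking a positivity or degree argument after substituting \(t\mapsto -t\).

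Orientations come last. For \(3\nmid N\), \(K_N\) is a knot, so the only reorientation is overall reversal. Overall reversal of a closed braid is again a closed braid around the same axis, and \(\AKh\) is invariant under it up to the grading symmetry, which gives (i). For \(3\mid N\), suppose \(L\) is a reorientation of \(\widehat\gamma\) that is not braid-like. Each component of \(\beta_N\)'s closure winds once around the axis and crosses each other component with zero algebraic linking, since \(\e=0\) is spread symmetrically and each pair of components is interchanged by the \(\mathbb Z/3\) symmetry. Reversing a component therefore shifts the \((i,j)\) gradings by multiples of the linking numbers, and these vanish. The annular grading \(k\) is changed only by a symmetric relabeling. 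The sharpness claim follows from this computation. For detection, we apply the extremal-grading argument to the unoriented link after suitably reorienting, which reduces case (ii) to the braid case.
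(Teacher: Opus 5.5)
\textbf{There is a genuine gap in how you handle the orientation of the competitor \(L\).}

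Grigsby--Ni/Kim only tells you that the \emph{unoriented} link underlying \(L\) is \(\widehat\gamma\) for some \(\gamma\in B_3\). The given orientation is then \(O^S\): the braid orientation \(O\) with some sublink \(S\) reversed. Nothing so far forces \(S=\emptyset\) or \(S=L\). Your sentence that ``the only reorientation is overall reversal'' is about the target \(K_N\), not about \(\widehat\gamma\). For \(3\nmid N\) this can be repaired via \(V_L(1)=(-2)^{c-1}\), but for \(3\mid N\) mixed orientations of \(\widehat\gamma\) are genuinely possible.

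For \(O^S\), the unique top-annular class lies in tridegree \((-2\lambda,\ \e(\gamma)+3-6\lambda,\ 3)\), and \(V_L=t^{-3\lambda}V_{(\widehat\gamma,O)}\). Here \(\lambda\) is the sum of linking numbers in \(\widehat\gamma\) between \(S\) and its complement. So both your extraction of \(\e(\gamma)=0\) (``\(n_+-n_-=\e(\gamma)\)'') and your use of Birman's formula, which needs the \emph{braid-oriented} Jones polynomial, silently assume \(\lambda=0\).

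Your \(\mathbb Z/3\)-symmetry argument proves that linking numbers vanish only for \(K_N\) itself. That suffices for the converse in (ii), but it says nothing about the unknown \(\widehat\gamma\). So ``apply the extremal-grading argument after suitably reorienting'' presupposes exactly what has to be proved.

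The missing observation is that the \emph{homological} degree of the top-annular class, which you never use, settles this:
\begin{itemize}
\item it is \(0\) for \(K_N\) and \(-2\lambda\) for \(L\), so \(\lambda=0\);
\item only then does the quantum degree give \(\e(\gamma)=0\), and \(V_{(\widehat\gamma,O)}=V_L=V_{K_N}\).
\end{itemize}
This handles all orientations at once. After rigidity shows \(\gamma\) is conjugate to \(\beta_N\), \(L\) is a reorientation of \(K_N\), and the knot versus three-component dichotomy yields (i) and (ii).

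\textbf{Two smaller points.}

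First, braid orientation versus its reverse cannot be ``read off from the sign of \(k\)''. Overall reversal preserves every crossing sign, and the annular grading is defined on the unoriented cube. Hence \(L\) and its overall reverse have identical \(\AKh\). This is harmless here, since (i) allows reversal, but the real subtlety is partial reversal, as above.

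Second, the rigidity step is still only a plan. To control cancellation you need a normalization with no signs at all. For example, substitute \(t=-s^{-1}\) and conjugate by \(\operatorname{diag}(-s,1)\); then \(\sigma_1^{-p}\) and \(\sigma_2^{q}\) become matrices with nonnegative coefficients. The extreme exponents \(P-3\ell\) and \(-Q-3\ell\) of the trace, together with the \emph{subleading} coefficients, then give \(P+Q=2N\) and \(r=N\). The subleading coefficients count the syllables \(r\); the leading ones are just \(1\) and carry no information. These equations force every \(p_i=q_i=1\) and \(\ell=0\).

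It is these equations, not the exponent sum, that remove the central factor. The condition \(\e=0\) only converts equality of Jones polynomials into equality of Burau traces. The periodic and reducible normal forms must also be excluded separately. One way: their Burau traces at \(t=-1\) have absolute value at most \(2\), whereas the trace for \((\sigma_1^{-1}\sigma_2)^N\) at \(t=-1\) exceeds \(2\) for every \(N\geq1\).
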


The classical rigidity statement that drives the proof may be of independent use.
For \(\alpha\in B_3\), let \(\e(\alpha)\) denote its exponent sum, and let
\(V_{\widehat\alpha}(t)\) be the Jones polynomial of its braid-oriented closure,
normalized by \(V_{\bigcirc}(t)=1\).

\begin{theorem}\label{thm:rigidity}
Let \(N\geq 1\) and \(\alpha\in B_3\).  If
\[
  \e(\alpha)=0
  \qquad\text{and}\qquad
  V_{\widehat\alpha}(t)=V_{K_N}(t),
\]
then \(\alpha\) is conjugate to \(\beta_N\) in \(B_3\).
\end{theorem}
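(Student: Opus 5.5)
We plan to reduce the statement to information carried by the reduced Burau representation, using Murasugi's classification of conjugacy classes in \(B_3\). Every \(\alpha\in B_3\) is conjugate to exactly one of \(\Delta^{2d}\sigma_1^{p}\sigma_2^{-1}\cdots\) normal forms. Concretely, we take \(\alpha\) conjugate to one of three types:
\begin{enumerate}[label=\textup{(\alph*)}]
\item the generic type \(\Delta^{2d}\sigma_1^{-a_1}\sigma_2^{b_1}\cdots\sigma_1^{-a_n}\sigma_2^{b_n}\) with all \(a_i,b_i\geq1\);
\item \(\Delta^{2d}\sigma_2^{m}\);
\item the periodic types \(\Delta^{2d}\sigma_1\sigma_2\), \(\Delta^{2d}(\sigma_1\sigma_2)^2\), \(\Delta^{2d}\sigma_1\), \(\Delta^{2d}\sigma_1^{-1}\).
\end{enumerate}
For a closed 3-braid the Jones polynomial is determined by the exponent sum and the trace of the reduced Burau matrix. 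Up to the usual normalization,
\[
V_{\widehat\alpha}(t)=(-\sqrt t)^{\e(\alpha)}\bigl(t+t^{-1}+\tr\bar\rho(\alpha)\bigr).
\]
So the hypotheses \(\e(\alpha)=0\) and \(V_{\widehat\alpha}=V_{K_N}\) give \(\tr\bar\rho(\alpha)=\tr\bar\rho(\beta_N)\). Since \(\det\bar\rho(\alpha)=t^{\e(\alpha)}=1\), the matrix \(\bar\rho(\alpha)\) lies in \(SL_2(\mathbb Z[t^{\pm1}])\) and has the same characteristic polynomial as \(\bar\rho(\beta_N)\).

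The first step is to compute \(\tr\bar\rho(\beta_N)\). The matrix \(M=\bar\rho(\sigma_1\sigma_2^{-1})\) has trace \(\tau=1-t-t^{-1}\), up to sign conventions. Then \(\tr M^N\) is the Chebyshev-type expression \(\lambda^N+\lambda^{-N}\), where \(\lambda+\lambda^{-1}=\tau\). Its extreme \(t\)-degrees are \(\pm N\), and its coefficients alternate in a controlled way.

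The second step is to compute \(\tr\bar\rho\) for each Murasugi normal form with \(\e=0\), as a function of the parameters.
\begin{itemize}
\item In case (a), \(\e=6d-\sum a_i+\sum b_i=0\). Multiplication by \(\Delta^{2}\) scales \(\bar\rho\) by \(t^{3}\), so the trace is \(t^{3d}\tr\bar\rho(\sigma_1^{-a_1}\cdots\sigma_2^{b_n})\). In a suitable basis, \(\bar\rho(\sigma_1^{-1})\) and \(\bar\rho(\sigma_2)\) are triangular with entries \(1\) and \(-t^{\mp1}\). Hence the alternating word has a positivity property: after the substitution \(t\mapsto -t\), or a sign twist, every monomial of \(\tr\) has the same sign. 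Therefore no cancellation occurs, and the top and bottom degrees are read off exactly: roughly \(3d+\sum b_i\) at the top and \(3d-\sum a_i\) at the bottom. This is the standard argument behind the fact that alternating 3-braids have span equal to the crossing number.
\item Matching the span \(2N\) and the extreme coefficients with those of \(\tr M^N\), together with \(\e=0\), should force \(d=0\), \(n=N\) and all \(a_i=b_i=1\). That word is exactly \(\beta_N\) up to cyclic permutation, i.e. up to conjugacy.
\item Cases (b) and (c) are finitely many one-parameter families. A direct trace computation, or a comparison of the values at \(t=-1\) (determinant) and at roots of unity, will rule them out, except where they coincide with small \(N\).
\end{itemize}

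The main obstacle is the degree bookkeeping in case (a). One must show that the extreme coefficients of \(\tr\bar\rho\) genuinely do not cancel, and that matching the full trace, not just its span, pins down \(n\) and the \(a_i,b_i\). Span alone yields only \(\sum a_i+\sum b_i=2N\) with \(d=0\). So the plan is to then compare the second-highest coefficient, which counts the syllables \(n\), and use \(\e=0\) to get \(\sum a_i=\sum b_i=N\). Next we observe that \(\tr\bar\rho\) at \(t=-1\) (giving the determinant of the closure) is maximized, among words with fixed \(\sum a_i,\sum b_i\), exactly when all exponents are \(1\). The determinant of \(W(3,N)\) is the Lucas-type value \(L_{2N}-2\). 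A strict monotonicity argument via continuants, where increasing some \(a_i\) while decreasing the syllable count strictly lowers the continuant, then forces \(a_i=b_i=1\).

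Finally, we must treat \(N=1,2\) and the periodic forms separately by hand. For \(N=1\), \(K_1\) is the unknot. For \(N=2\), \(K_2\) is the figure-eight knot. There, coincidences of Jones polynomials among 3-braid closures are known, and we check directly that the only \(\e=0\) candidates are conjugates of \(\beta_N\).
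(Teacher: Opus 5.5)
Your route is the paper's route. Birman's formula turns the hypothesis into equality of Burau traces. Murasugi normal form separates off the exceptional classes, which are killed at \(t=-1\). For the alternating normal form, a sign twist of the Burau matrices (the paper's \(X_p,Y_q\) under \(t=-s^{-1}\)) makes every entry nonnegative. So the extreme exponents, and the coefficients adjacent to them (which count syllables), can be read off without cancellation.

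The problem is your endgame, where one step is false. Matching extreme degrees gives \(3d+\sum b_i=N\) and \(3d-\sum a_i=-N\), that is, \(\sum a_i=N+3d\) and \(\sum b_i=N-3d\). This does \emph{not} give \(d=0\). Moreover, \(\e(\alpha)=6d-\sum a_i+\sum b_i=0\) then holds automatically, so it cannot yield \(\sum a_i=\sum b_i=N\) either. For instance, \(\Delta^{2}\sigma_1^{-7}\sigma_2\) has exponent sum zero and the same extreme \(t\)-degrees \(\pm4\) as \(\beta_4\); only the adjacent coefficients (\(1\) versus \(4\) in absolute value) tell them apart.

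The missing step is the one-line count the paper uses. The adjacent coefficient gives \(n=N\); compare absolute values, since the central factor contributes a sign. Since every \(a_i,b_i\ge1\), this gives \(\sum a_i\ge N\) and \(\sum b_i\ge N\). Together with \(\sum a_i+\sum b_i=2N\), it forces every \(a_i=b_i=1\), and then \(d=0\). So the continuant/determinant monotonicity argument, which you have not justified, is unnecessary. So is the separate treatment of \(N=1,2\). You do need the adjacent coefficient of \(\tr\bar\rho(\beta_N)\) to be \(\pm N\). Your remark that the coefficients "alternate in a controlled way" does not supply this. The cleanest way is to conjugate \(\beta_N\) to \((\sigma_1^{-1}\sigma_2)^N\), which is itself a generic normal form with \(n=N\), and apply the same syllable-counting lemma.

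A smaller point: your list of Murasugi forms omits \(\Delta^{2d+1}\), and \(\Delta^{2d}\sigma_1^{\pm1}\) duplicates type (b). This is harmless here. Every periodic type, and \(\Delta^{2d+1}\), has nonzero exponent sum, so under \(\e(\alpha)=0\) the only exceptional candidates are \(\Delta^{2d}\sigma_2^{-6d}\). Their Burau trace at \(t=-1\) is \(\pm2\), whereas \(\tr\bar\rho(\beta_N)\) at \(t=-1\) equals the Lucas number \(L_{2N}\ge3\). There are no small-\(N\) coincidences to check.
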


Theorem~\ref{thm:rigidity} is deliberately phrased in terms of the ordinary Jones
polynomial and one integer.  Both are already contained in the annular Jones
polynomial.  Consequently, the classical part of the proof holds for every \(N\).

\begin{corollary}\label{cor:annular-jones}
For every \(N\geq 1\), the annular Jones polynomial detects \(K_N\) among
braid-oriented braid closures in \(A\times I\).
\end{corollary}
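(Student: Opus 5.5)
The plan is to reduce Corollary~\ref{cor:annular-jones} to Theorem~\ref{thm:rigidity}. We read off from the annular Jones polynomial first the braid index, then the exponent sum, then the ordinary Jones polynomial. Fix the conventions: a braid-oriented closure \(\widehat\alpha\), for \(\alpha\in B_n\), lies in \(A\times I\), and its annular Jones polynomial \(J_{\widehat\alpha}(q,t,x)\) is the graded Euler characteristic of \(\AKh\) in the homological grading, keeping the quantum and annular gradings. Suppose \(\widehat\alpha\) is a braid closure with \(J_{\widehat\alpha}=J_{K_N}\). Closures of conjugate braids are isotopic in \(A\times I\), and so are braids related by an annular isotopy. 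So it suffices to show that \(\alpha\in B_3\) and that \(\alpha\) is conjugate to \(\beta_N\).

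\textbf{Step 1 (braid index).} For a closed \(n\)-braid, every enhanced state has annular grading of absolute value at most \(n\), since each circle contributes \(\pm1\) if it is essential and \(0\) otherwise. The extreme value \(k=\pm n\) is attained only in the oriented resolution, where all \(n\) circles are essential and carry the same label. That resolution is one state in a single homological degree, so it cannot cancel in the Euler characteristic. Hence the top \(x\)-degree of \(J_{\widehat\alpha}\) equals the strand number \(n\), and matching with \(K_N\) gives \(n=3\), so \(\alpha\in B_3\).

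\textbf{Step 2 (exponent sum).} The surviving extremal state sits in homological degree \(0\) relative to the standard normalization. Its quantum grading is \(\pm n\) plus a writhe shift, and the writhe equals \(\e(\alpha)\). So the coefficient of \(x^{3}\) in \(J\) is a single monomial whose \(q\)-exponent is an affine function of \(\e(\alpha)\). Comparing with \(K_N\), for which \(\e(\beta_N)=0\), gives \(\e(\alpha)=0\). Before relying on this I will check the sign and shift conventions for both positive and negative crossings, since the homological shift \(n_-\) also enters.

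\textbf{Step 3 (Jones polynomial).} Setting \(x=1\) forgets the annular grading and recovers the ordinary unnormalized Jones polynomial \((q+q^{-1})V\). The unnormalized polynomial determines \(V_{\widehat\alpha}\), so \(V_{\widehat\alpha}=V_{K_N}\). Theorem~\ref{thm:rigidity} now applies and shows that \(\alpha\) is conjugate to \(\beta_N\). Therefore \(\widehat\alpha\) is isotopic to \(K_N\) in \(A\times I\), with braid orientation, as required.

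The main obstacle is Step 2: pinning the exponent sum down exactly from the extremal coefficient with the correct grading normalization. If the quantum shift turns out to depend on \(n_+\) and \(n_-\) separately rather than only on the writhe, I would instead use the standard fact that the oriented resolution's \(q\)-grading is \(n+n_+-2n_-+(n_- - n_+)\)-type data combining to \(n+\mathrm{writhe}\). This fallback relies on that fact and has not been checked here. Everything else is bookkeeping.
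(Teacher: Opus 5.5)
Your proposal is correct and follows the paper's proof: the top annular degree in \eqref{eq:top-annular-term} forces three strands, the \(q\)-exponent of that single top coefficient forces \(\e(\alpha)=0\), and specializing the annular variable to one recovers \(V_{\widehat\alpha}=V_{K_N}\), so Theorem~\ref{thm:rigidity} gives conjugacy to \(\beta_N\). Your main claim in Step~2 is the right one, but your hedged fallback has an arithmetic slip: with \(j(x)=\deg(x)+|v|+n_+-2n_-\) and \(|v|=n_-\) at the braidlike vertex, the all-\(v_+\) state has \(j=n+n_+-n_-=n+\e(\alpha)\) (as in Lemma~\ref{lem:extremal}), whereas the expression you wrote sums to \(n-n_-\).
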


Recent annular detection results include Binns's theorem that integral annular
Khovanov homology detects \(\widehat{\sigma_1\sigma_2^n}\) for
\(-2\leq n\leq 5\), as well as the Mazur pattern \cite{Binns}.  Theorem
\ref{thm:main} supplies a uniform infinite family with alternating signs.  Its proof
has two independent parts.  The extremal annular class first forces a competing link
to be a closed three-braid and recovers its exponent sum, even when the given component
orientations are mixed.  The Jones polynomial then identifies the braid through
Murasugi normal form and adjacent coefficients of a Burau trace.  In particular, no
computation of the full annular Khovanov homology of \(K_N\) is required.

The orientation qualification in Theorem~\ref{thm:main}(ii) is unavoidable.  For
\(N=3m\), the three pairwise linking numbers of \(K_N\) all vanish.  Consequently,
reversing any sublink preserves the absolute gradings on annular Khovanov homology.
On the other hand, reversing one component changes the total winding number in the
annulus from \(3\) to \(1\), so the resulting oriented link is isotopic neither to
\(K_N\) nor to its overall reverse.  The point of the proof is that these obvious
reorientations are the only ambiguity.

\section{Annular and three-braid preliminaries}

\subsection{The extremal annular class}

We use the normalization in which the braidlike resolution of the closure of an
\(n\)-braid \(\alpha\), with all essential circles labelled by \(v_+\), has tridegree
\[
  (i,j,k)=(0,\e(\alpha)+n,n).
\]
The following facts are standard consequences of the definition and of the braid
detection theorem of Grigsby and Ni \cite[Corollary~1.2]{GrigsbyNi}.  The
formulation over \(\mathbb F_2\), in terms of the largest nonzero annular grading, is
\cite[Proposition~2.2.8]{Kim}; see also the use of braid detection in
\cite[proof of Theorem~3.11]{Binns}.

\begin{lemma}\label{lem:extremal}
Let \(\alpha\in B_n\), and give \(\widehat\alpha\) the braid orientation.  Then
\[
  \AKh^{0,\e(\alpha)+n,n}(\widehat\alpha;\F)\cong\F,
\]
all other summands in annular grading \(n\) vanish, and all summands in annular
grading greater than \(n\) vanish.

Conversely, if the total rank of \(\AKh(L;\F)\) in its largest nonzero annular
grading is one, then \(L\) is isotopic to a closed braid.  Its braid index is that
largest annular grading.
\end{lemma}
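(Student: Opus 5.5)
The plan is to prove the two halves of Lemma~\ref{lem:extremal} separately. The forward direction is a direct computation in the annular chain complex. The converse is a translation of the Grigsby--Ni and Kim braid detection theorems into the normalization fixed above.

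\emph{Forward direction.} Take the closure \(\widehat\alpha\) of an \(n\)-braid diagram and view the annular chain complex \(CKh(\widehat\alpha)\) as filtered by the annular grading \(k\). The differential does not increase \(k\), and \(\AKh\) is the homology of its associated graded complex. In any resolution, the annular grading of a generator is the number of essential circles labelled \(v_+\) minus the number labelled \(v_-\). A resolution of a diagram meeting a generic meridional ray in \(n\) points has at most \(n\) essential circles. Moreover, the only resolution with \(n\) essential circles is the braidlike one, in which every crossing receives the oriented resolution. Hence annular gradings \(>n\) never occur at the chain level. Grading \(n\) is realized by exactly one generator: the braidlike resolution with all essential circles labelled \(v_+\). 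By the chosen normalization this generator sits in tridegree \((0,\e(\alpha)+n,n)\). In the associated graded complex, any differential into or out of this generator must preserve \(k\). A resolution change at a crossing merges or splits essential circles and so changes their number, while a \(k\)-preserving component would need the same number of essential circles. Since the neighbouring resolutions have fewer than \(n\) essential circles, these components vanish in grading \(n\). The grading-\(n\) summand of the complex is therefore the single generator with zero differential, which gives \(\AKh^{0,\e(\alpha)+n,n}\cong\F\) and the two vanishing statements. This is essentially \cite[Proposition~2.2.8]{Kim}, and I would cite it rather than redo the bookkeeping of the orientation shifts.

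\emph{Converse.} Suppose \(L\) is an annular link whose top nonzero annular grading \(m\) carries total rank one. By Grigsby--Ni \cite[Corollary~1.2]{GrigsbyNi}, in Kim's characteristic-two form \cite{Kim}, \(L\) is isotopic to a closed braid whenever \(\AKh\) in the maximal grading equal to the wrapping number has rank one. The step I expect to be the main obstacle is identifying \(m\) with that wrapping (meridional) number.

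For a general annular link, \(\AKh\) vanishes above the geometric intersection number \(w\) with a meridional disk. This follows from the chain-level bound above applied to a diagram realizing \(w\). That bound alone does not show the top nonzero grading equals \(w\); this nonvanishing is exactly the content of the Grigsby--Ni theorem, obtained through sutured Floer homology, with Xie's instanton argument as an alternative. I would quote it as follows: the top grading \(w\) is nonzero, and it has rank one if and only if \(L\) is a braid closure. Then \(m=w\), and the rank-one hypothesis forces \(L\) to be a closed \(w\)-braid. Its braid index is \(m\), since a braid closure of index \(n\) has wrapping number \(n\), consistent with the forward direction. Over \(\F\) no orientation hypothesis is needed, because reorienting a sublink only shifts the \((i,j)\) gradings and leaves the annular grading unchanged. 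This is why the lemma can be applied later to links with mixed orientations.
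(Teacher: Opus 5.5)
Your forward direction is the paper's own argument: cut along a meridional disk, observe that every non-braidlike resolution has a turnback, and conclude that the braidlike resolution with all essential circles labelled $v_+$ is the only generator in annular grading $n$, with nothing above it. One justification there is wrong as stated. A $k$-preserving component of the differential can change the number of essential circles; for example, merging two essential circles labelled $v_+\otimes v_-$ into a trivial circle preserves $k$. You don't need that claim. The grading-$n$ chain group is one-dimensional and sits in a single homological degree, so the differential vanishes on it for degree reasons.

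The converse has a genuine gap. Your argument rests on identifying the largest nonzero annular grading $m$ with the wrapping number $w$. You justify this by quoting ``the top grading $w$ is nonzero'' as the content of Grigsby--Ni, but that is not what they prove. Their theorem is a rank-one characterization: sutured Khovanov homology of a balanced tangle has rank one iff the tangle is a braid. It gives no nonvanishing statement for $\AKh(L;w)$ when $L$ is an arbitrary annular link.

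Floer-theoretic nonvanishing does not supply this either, whether it comes from tautness of the branched double cover or from Xie's instanton argument. Those methods see a Thurston-norm-type complexity of the meridional class in the exterior of $L$. A disk meeting $L$ in $w$ points bounds that complexity from above, but a higher-genus surface in the same class may meet $L$ fewer times. So nothing in those arguments forces $\AKh(L;w)\neq 0$. You therefore cannot exclude $m<w$, and in that case your reduction to the tangle cut at $w$ points never applies.

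The missing point is that no a priori identification $m=w$ is needed. The detection theorem is available directly in the form ``rank one in the largest nonzero annular grading implies braid closure.'' That is Kim's Proposition~2.2.8, which is exactly what the paper cites. The rank-one hypothesis is what lets one work in the top nonzero grading without knowing it equals $w$; for instance, an odd-rank group cannot be cancelled entirely by a spectral sequence that respects the annular grading. Once $L$ is known to be a closed braid, your forward direction gives $m=w$ and the braid-index statement a posteriori.
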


\begin{proof}
For a braid closure, cut along a meridional disk.  All complete resolutions of the
resulting tangle except the braidlike one backtrack.  The top annular summand is
therefore one-dimensional.  In the usual normalized Khovanov cube, the braidlike
resolution uses one \(1\)-resolution for every negative crossing, so its homological
degree is zero.  Labelling its \(n\) essential circles by \(v_+\) gives quantum degree
\(n+\e(\alpha)\) and annular degree \(n\).  The converse is the braid-detection
theorem cited above.
\end{proof}

Define the annular Jones polynomial by
\[
  \mathcal J_A(L;q,z)
  =\sum_{i,j,k}(-1)^i q^jz^k
       \dim_{\F}\AKh^{i,j,k}(L;\F).
\]
Its specialization at \(z=1\) is the usual Khovanov graded Euler characteristic and
hence determines the ordinary Jones polynomial.  Lemma~\ref{lem:extremal} gives
\begin{equation}\label{eq:top-annular-term}
  [z^n]\mathcal J_A(\widehat\alpha;q,z)=q^{\e(\alpha)+n}
\end{equation}
for a braid-oriented \(n\)-braid closure.

\subsection{Reorienting components}

Let \(L\) be an oriented link and let \(S\) be a union of its components.  Denote by
\(L^S\) the oriented link obtained by reversing every component in \(S\), and set
\[
  \lambda_L(S)
  =\sum_{\substack{C\in\pi_0(S)\\ C'\in\pi_0(L\setminus S)}}
     \operatorname{lk}_L(C,C').
\]
Here and below, the linking numbers on the right are computed before the reversal.
We use the standard inclusion of the thickened annulus into \(S^3\) when forming
these linking numbers.

\begin{lemma}\label{lem:reorientation}
There is a canonical identification of the unnormalized annular Khovanov cubes of
\(L\) and \(L^S\).  Under this identification, a class of tridegree \((i,j,k)\) for
\(L\) has tridegree
\[
  (i-2\lambda_L(S),\ j-6\lambda_L(S),\ k)
\]
for \(L^S\).  Equivalently,
\[
  \AKh^{i,j,k}(L^S;\F)
  \cong
  \AKh^{i+2\lambda_L(S),\,j+6\lambda_L(S),\,k}(L;\F).
\]
Moreover,
\begin{equation}\label{eq:jones-reorientation}
  V_{L^S}(t)=t^{-3\lambda_L(S)}V_L(t).
\end{equation}
\end{lemma}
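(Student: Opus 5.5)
The plan is to compare the two Khovanov cubes directly. The unnormalized cube, including its annular grading, depends only on the unoriented diagram; orientation enters only through the normalization shift by the crossing counts \(n_\pm\). So the work is to track how \(n_\pm\) change under reversing \(S\).

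Fix an annular diagram \(D\) of \(L\), and let \(D^S\) be the same diagram with the components in \(S\) reversed.

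\emph{The unnormalized cubes agree.} At each crossing, the \(0\)- and \(1\)-resolutions are defined from the unoriented crossing, using the rule that the \(0\)-smoothing joins the regions swept counterclockwise by the overstrand. Hence the complete resolutions, the circles and their essential/trivial status, and the maps \(m,\Delta\) in Frobenius-algebra form are literally identical for \(D\) and \(D^S\). The annular grading of a generator is the signed count of \(v_\pm\) labels on essential circles. It uses no orientation of \(L\), since essential circles are oriented by the annulus. Therefore \(k\) is unchanged, and the cubes are canonically identified, as is the differential.

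\emph{The normalization shift.} With the convention \(\AKh=H(\llbracket D\rrbracket)[-n_-]\{n_+-2n_-\}\), a class in cube height \(r\) and unnormalized quantum degree \(r+\deg\) has \(i=r-n_-\) and \(j=\deg+r+n_+-2n_-\). Reversing \(S\) changes the sign of exactly the crossings between \(S\) and \(L\setminus S\). Crossings within \(S\), or within its complement, keep their sign, because reversing both strands preserves the sign. Let \(p,m\) be the numbers of positive and negative mixed crossings. Then \(\lambda_L(S)=(p-m)/2\), and
\[
n_-' = n_- + p - m = n_- + 2\lambda_L(S), \qquad n_+' = n_+ - 2\lambda_L(S).
\]
Consequently \(i' = i - 2\lambda_L(S)\) and \(j' = j - 2\lambda_L(S) - 4\lambda_L(S) = j - 6\lambda_L(S)\). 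This gives the stated tridegree shift. The isomorphism \(\AKh^{i,j,k}(L^S)\cong\AKh^{i+2\lambda,j+6\lambda,k}(L)\) follows by reindexing.

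\emph{The Jones polynomial formula.} There are two routes.

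\begin{itemize}
\item Take the graded Euler characteristic at \(z=1\). The factor \((-1)^{i}\) is unaffected because the shift \(2\lambda\) is even. The \(q\)-power shifts by \(q^{-6\lambda}\). Under \(q^2=-t^{-1}\), i.e. \(q^{-6\lambda}\mapsto t^{3\lambda}\) up to the standard convention, this gives \(V_{L^S}=t^{-3\lambda}V_L\).
\item Alternatively, use the classical formula \(V=(-A^{3})^{-w}\langle D\rangle\). The writhe changes by \(-4\lambda\), which yields the factor \(A^{12\lambda}=t^{-3\lambda}\) with \(t=A^{-4}\).
\end{itemize}

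The main obstacle is getting the signs right. One must check that the identification is canonical (it fixes the resolution convention independently of orientation), that \(k\) uses the annulus orientation, and that the sign in \(t^{\mp3\lambda}\) matches the paper's normalization. I would resolve the last point by testing on the Hopf link, where reversing one component gives \(\lambda=\pm1\). Linking numbers in \(S^3\) agree with the diagrammatic count via the standard inclusion, so no extra issue arises there.
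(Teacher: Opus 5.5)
Your proposal is correct and follows the paper's proof essentially verbatim. The paper's argument has the same ingredients: the unoriented cube, differential and annular grading are unchanged; only the mixed crossings flip sign, so \(n_-\) rises by \(2\lambda_L(S)\) and \(n_+\) falls by \(2\lambda_L(S)\), which gives the \((-2\lambda_L(S),-6\lambda_L(S),0)\) shift; and the Jones formula follows from the writhe change \(-4\lambda_L(S)\) in the Kauffman-bracket formula, yielding \(A^{12\lambda_L(S)}=t^{-3\lambda_L(S)}\), exactly as in your second route.

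One correction to your Euler-characteristic route. With the normalization \(j=\deg+|v|+n_+-2n_-\), the correct substitution is \(q=-t^{1/2}\) (so \(q^2=t\)), and it sends \(q^{-6\lambda}\mapsto t^{-3\lambda}\). The substitution \(q^2=-t^{-1}\) that you wrote would instead give \((-1)^{\lambda}t^{3\lambda}\), which contradicts your own conclusion.
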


\begin{proof}
Fix a diagram of \(L\), and let \(p\) and \(r\) be the numbers of positive and
negative crossings, respectively, between \(S\) and its complement.  Then
\(p-r=2\lambda_L(S)\).  Reversing \(S\) interchanges positive and negative signs at
exactly these crossings.  Hence
\[
 n_-(L^S)-n_-(L)=2\lambda_L(S),
 \qquad
 n_+(L^S)-n_+(L)=-2\lambda_L(S).
\]
The unoriented cube, its differential over \(\F\), and the annular grading are
unchanged.  In particular, reorientation changes no cube vertex or smoothing; only
the normalizing shifts depend on the crossing signs.  For a labelled resolution \(x\)
at a vertex \(v\), the normalized
gradings are
\[
 i(x)=|v|-n_-,
 \qquad
 j(x)=\deg(x)+|v|+n_+-2n_-.
\]
The displayed changes in \(n_+\) and \(n_-\) therefore give the asserted shifts.

For completeness, the Jones formula follows directly from the Kauffman bracket.
The writhe changes by \(-4\lambda_L(S)\), while the unoriented bracket is unchanged.
With \(V_L(t)=(-A)^{-3w(D)}\langle D\rangle\) and \(t=A^{-4}\), the ratio is
\(A^{12\lambda_L(S)}=t^{-3\lambda_L(S)}\).
\end{proof}

\begin{lemma}\label{lem:target-linking}
If \(N=3m\), then all three pairwise linking numbers of the braid-oriented link
\(K_N\) vanish.
\end{lemma}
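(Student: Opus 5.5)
We prove Lemma~\ref{lem:target-linking} by computing linking numbers directly from the braid word, reading crossings as contributions between the strands' components. For \(N=3m\) the permutation of \(\sigma_1\sigma_2^{-1}\) is a three-cycle, so \((\sigma_1\sigma_2^{-1})^3\) is a pure braid. Each of the three components of \(K_N\) is therefore the closure of a single strand of \(\beta_N\), which stays in a fixed starting position after every block of three letters-pairs. The linking number of two components \(C,C'\) equals half the signed count of crossings in the closed braid diagram in which one strand belongs to \(C\) and the other to \(C'\). In a braid closure all strands are oriented in the same direction, so the sign of a crossing is the exponent sign of the corresponding generator.

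The plan is to track the permutation through one period \(\gamma=(\sigma_1\sigma_2^{-1})^3\) and record, for each letter, which pair of components it crosses and with what sign. Label the strand positions \(1,2,3\). Then \(\sigma_1\) swaps positions \(1,2\) with sign \(+1\), and \(\sigma_2^{-1}\) swaps positions \(2,3\) with sign \(-1\). Follow the components \(a,b,c\) starting at positions \(1,2,3\) through the six letters of \(\gamma\):
\begin{itemize}
\item \(\sigma_1\) crosses \(\{a,b\}\) with sign \(+1\), leaving \((b,a,c)\);
\item \(\sigma_2^{-1}\) crosses \(\{a,c\}\) with sign \(-1\), leaving \((b,c,a)\);
\item \(\sigma_1\) crosses \(\{b,c\}\) with sign \(+1\), leaving \((c,b,a)\);
\item \(\sigma_2^{-1}\) crosses \(\{b,a\}\) with sign \(-1\), leaving \((c,a,b)\);
\item \(\sigma_1\) crosses \(\{c,a\}\) with sign \(+1\), leaving \((a,c,b)\);
\item \(\sigma_2^{-1}\) crosses \(\{c,b\}\) with sign \(-1\), leaving \((a,b,c)\).
\end{itemize}
Each unordered pair thus receives exactly one \(+1\) and one \(-1\) crossing in \(\gamma\).

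Since \(\beta_N=\gamma^m\) and \(\gamma\) returns every component to its starting position, the same tally repeats in each of the \(m\) blocks. Each pairwise signed crossing count is therefore \(0\), and so every pairwise linking number vanishes. The only care needed, which is also the main (mild) obstacle, is to confirm two facts: the braid closure's linking numbers computed in \(A\times I\subset S^3\) via the standard inclusion agree with this diagrammatic count, and the sign convention (all strands co-oriented, so the sign of \(\sigma_i^{\pm1}\) is \(\pm1\)) matches the one used in Lemma~\ref{lem:reorientation}. Both are standard.
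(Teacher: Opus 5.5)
Your proposal is correct and follows essentially the same route as the paper: track the three strands through one pure block \((\sigma_1\sigma_2^{-1})^3\), observe that each pair of components meets once with sign \(+1\) and once with sign \(-1\), and repeat the block \(m\) times. Your crossing table agrees exactly with the paper's (pairs \(12,13,23,12,13,23\) with signs \(+,-,+,-,+,-\)).
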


\begin{proof}
Label the strands at the top of a block \((ab^{-1})^3\) by \(1,2,3\), in order.
Tracking labels through that block gives
\[
  123\xrightarrow{a}213\xrightarrow{b^{-1}}231
  \xrightarrow{a}321\xrightarrow{b^{-1}}312
  \xrightarrow{a}132\xrightarrow{b^{-1}}123.
\]
Thus the six crossings, listed in order, have
\[
\begin{array}{c|cccccc}
\text{pair of labels}&12&13&23&12&13&23\\
\text{crossing sign}&+&-&+&-&+&-
\end{array}
\]
Each pair consequently has signed crossing sum zero.  Since the block is pure and
\(\beta_{3m}=((ab^{-1})^3)^m\), repetition of the block shows that every pairwise
linking number, one half of the corresponding signed crossing sum, is zero.
\end{proof}

\begin{corollary}\label{cor:target-reorientations}
If \(3\mid N\), then for every union \(S\) of components of \(K_N\),
\[
  \AKh(K_N^S;\F)\cong\AKh(K_N;\F)
\]
as absolutely triply graded vector spaces.
\end{corollary}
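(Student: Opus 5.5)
The plan is to combine Lemma~\ref{lem:reorientation} with Lemma~\ref{lem:target-linking}. Fix \(N=3m\) and an arbitrary union \(S\) of components of \(K_N\), where \(K_N\) carries its braid orientation. By Lemma~\ref{lem:reorientation}, the unnormalized annular Khovanov cubes of \(K_N\) and \(K_N^S\) are canonically identified, with the same differential over \(\F\) and the same annular grading. A class in tridegree \((i,j,k)\) for \(K_N\) therefore sits in tridegree \((i-2\lambda,\,j-6\lambda,\,k)\) for \(K_N^S\), where \(\lambda=\lambda_{K_N}(S)\).

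It remains to show \(\lambda=0\). By definition, \(\lambda\) is a sum of pairwise linking numbers \(\operatorname{lk}(C,C')\) with \(C\subset S\) and \(C'\not\subset S\), all computed in the braid orientation before reversal. Lemma~\ref{lem:target-linking} says each such pairwise linking number vanishes when \(3\mid N\), so every summand is zero and \(\lambda=0\). The grading shift is then trivial. Hence the identification of cubes induces an isomorphism \(\AKh^{i,j,k}(K_N^S;\F)\cong\AKh^{i,j,k}(K_N;\F)\) for all \((i,j,k)\), which is exactly the claim.

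There is essentially no obstacle. The only point to check is that the linking numbers in \(\lambda_L(S)\) are taken with respect to the original orientation of \(L=K_N\), which is the braid orientation. Lemma~\ref{lem:target-linking} is stated for exactly that orientation. The degenerate cases \(S=\emptyset\) and \(S=K_N\) are also covered: the sum defining \(\lambda\) is empty, so \(\lambda=0\) there as well.
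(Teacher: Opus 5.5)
Your proposal is correct and follows essentially the same route as the paper: Lemma~\ref{lem:target-linking} forces \(\lambda_{K_N}(S)=0\) for every \(S\), so the shift \((i-2\lambda,\,j-6\lambda,\,k)\) from Lemma~\ref{lem:reorientation} is trivial. Your remarks on the orientation used for the linking numbers and on the degenerate cases \(S=\emptyset\), \(S=K_N\) are accurate but not needed beyond what the paper states.
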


\begin{proof}
Lemma~\ref{lem:target-linking} gives \(\lambda_{K_N}(S)=0\) for every \(S\), so the
claim is immediate from Lemma~\ref{lem:reorientation}.
\end{proof}

\subsection{The Burau representation and Murasugi normal form}

Put \(a=\sigma_1\), \(b=\sigma_2\), and \(\Delta=aba\), so that
\(\Delta^2=(ab)^3\) generates the center of \(B_3\).  We use the reduced Burau
representation
\begin{equation}\label{eq:burau}
  \psi_t(a)=
  \begin{pmatrix}-t&1\\0&1\end{pmatrix},
  \qquad
  \psi_t(b)=
  \begin{pmatrix}1&0\\t&-t\end{pmatrix}.
\end{equation}
It satisfies
\begin{equation}\label{eq:center}
  \psi_t(\Delta^2)=t^3 I_2.
\end{equation}
Birman's formula for the Jones polynomial of a closed three-braid is
\begin{equation}\label{eq:birman}
  V_{\widehat\alpha}(t)
  =(-\sqrt t)^{\e(\alpha)}
    \bigl(t+t^{-1}+\tr\psi_t(\alpha)\bigr)
\end{equation}
\cite{Birman}.  In particular, if two three-braids have the same exponent sum zero,
then equality of their Jones polynomials is equivalent to equality of their Burau
traces.

We will use Murasugi's conjugacy normal form \cite[Proposition~2.1]{Murasugi}; a
convenient modern statement appears in \cite[Definition~4.15]{Truol}.  Every
three-braid is conjugate to a braid of one of the following types:
\begin{align}
  &\Delta^{2\ell}a^p \quad\text{or}\quad \Delta^{2\ell+1},
       &&\ell,p\in\mathbb Z; \label{eq:nf-a}\\
  &\Delta^{2\ell}ab \quad\text{or}\quad \Delta^{2\ell}(ab)^2,
       &&\ell\in\mathbb Z; \label{eq:nf-b}\\
  &\Delta^{2\ell}a^{-p_1}b^{q_1}\cdots a^{-p_r}b^{q_r},
       &&\ell\in\mathbb Z,\quad r\geq1,\quad p_i,q_i\geq1. \label{eq:nf-c}
\end{align}
The alternatives, with the standard cyclic ambiguity in the last line, classify
conjugacy classes.

Conjugation by \(\Delta\) interchanges \(a\) and \(b\).  Hence
\(\Delta(ab^{-1})\Delta^{-1}=ba^{-1}\), and conjugating the latter by \(b^{-1}\)
gives \(a^{-1}b\).  Therefore
\begin{equation}\label{eq:target-conjugate}
  \beta_N=(ab^{-1})^N
  \quad\text{is conjugate to}\quad
  \gamma_N=(a^{-1}b)^N.
\end{equation}
The latter is already in the form \eqref{eq:nf-c}.

\section{Extremal coefficients of a Burau trace}

For a Laurent polynomial \(f\), write \([s^d]f\) for the coefficient of \(s^d\).
For positive integers \(p,q\), set
\[
 X_p=
 \begin{pmatrix}
 s^p&1+s+\cdots+s^{p-1}\\0&1
 \end{pmatrix},
 \qquad
 Y_q=
 \begin{pmatrix}
 1&0\\1+s^{-1}+\cdots+s^{-(q-1)}&s^{-q}
 \end{pmatrix}.
\]

\begin{lemma}\label{lem:trace-coefficients}
Let
\[
  T(s)=\tr\bigl(X_{p_1}Y_{q_1}\cdots X_{p_r}Y_{q_r}\bigr),
  \qquad
  P=\sum_{i=1}^r p_i,
  \quad Q=\sum_{i=1}^r q_i.
\]
Then the largest and smallest exponents occurring in \(T\) are \(P\) and \(-Q\),
respectively, and
\begin{align*}
  [s^P]T&=1,& [s^{P-1}]T&=r,\\
  [s^{-Q}]T&=1,& [s^{-Q+1}]T&=r.
\end{align*}
\end{lemma}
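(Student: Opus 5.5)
We will expand the trace as a sum over closed walks in a two-state transition system and isolate the extreme-degree contributions. Write each matrix entrywise with indices in \(\{1,2\}\): \(X_p\) has entries \((X_p)_{11}=s^p\), \((X_p)_{12}=1+\cdots+s^{p-1}\), \((X_p)_{21}=0\), \((X_p)_{22}=1\). Similarly \((Y_q)_{11}=1\), \((Y_q)_{12}=0\), \((Y_q)_{21}=1+s^{-1}+\cdots+s^{-(q-1)}\), \((Y_q)_{22}=s^{-q}\). Then
\[
T=\sum_{\text{index sequences}} \prod (X_{p_i})_{u_iv_i}(Y_{q_i})_{v_iu_{i+1}},
\]
with cyclic indices \(u_{r+1}=u_1\). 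Every entry has nonnegative coefficients, so no cancellation can occur, and extremal coefficients are simply counts of walks achieving the extreme degree.

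For the top degree, note that in \(X_p\) the maximal degree \(p\) is attained only by the \((1,1)\) entry. The \((1,2)\) entry has top degree \(p-1\), and the \((2,2)\) entry has degree \(0\). In \(Y_q\) every entry has maximum degree \(\le 0\), with degree \(0\) attained by the \((1,1)\) and \((2,1)\) entries. The degree of a walk term is therefore at most \(\sum_i(\text{top degree of the }X\text{ entry})\). The all-ones walk gives exactly \(s^P\) with coefficient \(1\), and it is the unique walk reaching degree \(P\). This gives \([s^P]T=1\).

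For degree \(P-1\) we must lose exactly one unit. The deficits are: using the \(X\) entry \((1,2)\) costs \(1\); using \((2,2)\) costs \(p_i\); and any \(Y\) entry other than the degree-zero ones \((1,1)\) and \((2,1)\) costs at least \(1\). Here it matters that \(Y_{22}=s^{-q}\) with \(q\ge1\), and that the \(Y\) entry \((2,1)\) contributes its constant term \(1\).

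Next we classify the walks. A walk that never leaves state \(1\) in the \(X\) steps stays at the top and gives degree \(P\). Suppose instead that at some block \(i\) the walk goes \(1\to2\) via \(X_{12}\), losing \(1\). It must then return \(2\to1\) at the very next \(Y_{q_i}\) through its constant term, losing \(0\). Any longer excursion forces either a \(Y_{22}\) or an \(X_{22}\) step, each costing at least \(1\) more. Exactly one such excursion is allowed, and its location \(i\in\{1,\dots,r\}\) is free, with coefficient \(1\) each, so \([s^{P-1}]T=r\).

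The bottom-degree claims follow by the symmetric argument. The cleanest route is to observe that conjugating by the antidiagonal swap and substituting \(s\mapsto s^{-1}\) exchanges the roles of \(X_p\) and \(Y_q\), up to a cyclic shift in the trace. Concretely, \(J X_p(s^{-1})J\) has the shape of \(Y_p\), where \(J=\begin{pmatrix}0&1\\1&0\end{pmatrix}\). Since the trace is invariant under conjugation and cyclic rotation, the same counting then gives \([s^{-Q}]T=1\) and \([s^{-Q+1}]T=r\). Alternatively, one can simply repeat the walk count with state \(2\), using \(Y_{22}=s^{-q}\) as the unique minimal entry.

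The main obstacle is the bookkeeping in the degree-\((P-1)\) classification, namely verifying that no other walk loses exactly one degree. In particular, one must check the case \(p_i=1\), where \(X_{22}\) costs only \(1\): a walk entering state \(2\) via \(X_{12}\) and staying via \(X_{22}\) still needs a \(Y_{22}\) or extra loss. One must also check that a walk starting in state \(2\) must enter it somewhere via \(X_{12}\), which already costs \(1\), by cyclicity.
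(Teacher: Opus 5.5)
Your proof is correct and essentially the paper's argument: you expand the trace as a sum over closed two-state paths with nonnegative coefficients, observe that the constant path at state 1 is the unique path of degree \(P\), and show that degree \(P-1\) comes exactly from the \(r\) one-block excursions using \(s^{p_i-1}\) in \(X_{p_i}\) and the constant term of \(Y_{q_i}\). Where you differ is the bottom degree: deriving it from the top-degree count via \(JX_p(s^{-1})J=Y_p(s)\) and cyclic rotation is a tidy replacement for the paper's ``analogous'' repetition. Your one slip, that a walk starting in state 2 must enter it via \(X_{12}\), fails for the constant path at state 2, but this is harmless because that path has degree \(-Q<P-1\).
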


\begin{proof}
All matrix entries have nonnegative coefficients, so there can be no cancellation.
Expand the trace as a sum of weights of closed paths through the two states
\(\{1,2\}\), one transition for each matrix factor.  The constant path at state
\(1\) has weight \(s^P\), while the constant path at state \(2\) has weight
\(s^{-Q}\).

Compare a nonconstant path with the constant path at state \(1\).  Every excursion
from state \(1\) to state \(2\) begins in some \(X_{p_i}\) and ends in a later
\(Y_{q_j}\), with indices understood cyclically.  Such an excursion lowers the
degree by at least one.  The loss is exactly one only when the path leaves in
\(X_{p_i}\), immediately returns in \(Y_{q_i}\), and uses the monomials
\(s^{p_i-1}\) and \(1\) in the two off-diagonal entries.  There are exactly \(r\)
such paths, each with coefficient one.  This proves the two assertions at the top
degree.

The bottom degree is analogous.  Relative to the constant path at state \(2\), a
nonconstant path raises the degree by at least one.  Equality occurs precisely when
it leaves state \(2\) in \(Y_{q_i}\), using \(s^{-(q_i-1)}\), and returns at the next
\(X\)-factor, using its constant off-diagonal monomial.  Again there are \(r\)
choices.  When \(P=Q=r=1\), the two adjacent exponents coincide; both descriptions
refer to the same unique path, and the stated coefficient is still one.
\end{proof}

The matrices in Lemma~\ref{lem:trace-coefficients} are exactly a conjugate of the
Burau matrices suited to Murasugi normal form.  Indeed, for
\(D_s=\operatorname{diag}(-s,1)\), define
\[
  \rho_s(\alpha)=D_s^{-1}\psi_{-s^{-1}}(\alpha)D_s.
\]
A direct calculation gives
\begin{equation}\label{eq:rho}
  \rho_s(a^{-p})=X_p,
  \qquad
  \rho_s(b^q)=Y_q,
  \qquad
  \rho_s(\Delta^{2\ell})=(-1)^\ell s^{-3\ell}I_2.
\end{equation}

\begin{proof}[Proof of Theorem~\ref{thm:rigidity}]
By \eqref{eq:target-conjugate}, it is enough to prove that \(\alpha\) is conjugate to
\(\gamma_N=(a^{-1}b)^N\).  Since both exponent sums are zero, Birman's formula
\eqref{eq:birman} gives
\begin{equation}\label{eq:trace-equality}
  \tr\psi_t(\alpha)=\tr\psi_t(\gamma_N).
\end{equation}

We first rule out the exceptional normal forms \eqref{eq:nf-a} and
\eqref{eq:nf-b}.  At \(t=-1\), the matrices \(\psi_{-1}(a)\) and
\(\psi_{-1}(b)\) are parabolic and \(\psi_{-1}(\Delta^2)=-I_2\).  Thus the
absolute value of the trace is \(2\) for \(\Delta^{2\ell}a^p\), zero for
\(\Delta^{2\ell+1}\), and one for either braid in \eqref{eq:nf-b}.  On the other
hand,
\[
  \psi_{-1}(a^{-1}b)
  =\begin{pmatrix}2&-1\\-1&1\end{pmatrix}
\]
has determinant one and trace three.  If \(u_N\) is the trace of its \(N\)-th
power, then \(u_0=2\), \(u_1=3\), and
\(u_N=3u_{N-1}-u_{N-2}\).  Hence \(u_N>2\) for every \(N\geq1\), contradicting
\eqref{eq:trace-equality} for the exceptional forms.

Consequently, \(\alpha\) is conjugate to
\[
  \Delta^{2\ell}a^{-p_1}b^{q_1}\cdots a^{-p_r}b^{q_r},
  \qquad p_i,q_i\geq1.
\]
Put \(P=\sum p_i\) and \(Q=\sum q_i\).  After substituting \(t=-s^{-1}\),
equations \eqref{eq:trace-equality} and \eqref{eq:rho} identify
\[
  (-1)^\ell s^{-3\ell}
  \tr(X_{p_1}Y_{q_1}\cdots X_{p_r}Y_{q_r})
  \quad\text{with}\quad
  \tr((X_1Y_1)^N).
\]
Lemma~\ref{lem:trace-coefficients} compares the extremal exponents and the
coefficients adjacent to them.  It yields
\begin{equation}\label{eq:degree-equations}
  P-3\ell=N,
  \qquad -Q-3\ell=-N,
  \qquad r=N.
\end{equation}
The last equality follows by taking absolute values of the adjacent coefficients;
the central factor contributes only the sign \((-1)^\ell\).  The first two equations
give \(P+Q=2N\).  Since every \(p_i,q_i\) is positive and \(r=N\), we have
\(P\geq N\) and \(Q\geq N\).  Equality of their sum therefore forces
\[
  P=Q=N
  \quad\text{and}\quad
  p_i=q_i=1\quad(1\leq i\leq N).
\]
The first equation in \eqref{eq:degree-equations} now gives \(\ell=0\).  Thus
\(\alpha\) is conjugate to \((a^{-1}b)^N=\gamma_N\), as required.
\end{proof}

The coefficient mechanism above is related to the analysis of Jones polynomials of
closed alternating three-braids in \cite{Chbili}.  The path proof is included to fix
all signs, central shifts and the case in which the two adjacent exponents coincide.

\section{Detection}

\begin{proof}[Proof of Theorem~\ref{thm:main}]
Let \(L\) satisfy the stated trigraded isomorphism.  Taking the graded Euler
characteristic and forgetting the annular variable gives
\begin{equation}\label{eq:jones-main}
  V_L(t)=V_{K_N}(t).
\end{equation}
By Lemma~\ref{lem:extremal}, \(\AKh(K_N;\F)\) has rank one in its largest nonzero
annular grading, namely \(k=3\).  The same is true of \(L\), so the converse part of
that lemma shows that the underlying unoriented link of \(L\) is the closure of a
three-braid \(\alpha\).  Give \(\widehat\alpha\) its braid orientation, denoted \(O\).
The given orientation on \(L\) is \(O^S\) for some union \(S\) of components.  Put
\(\lambda=\lambda_{(\widehat\alpha,O)}(S)\).

In the braid orientation, the unique class in annular grading three has tridegree
\[
  (0,\e(\alpha)+3,3)
\]
by Lemma~\ref{lem:extremal}.  For \(K_N\), the corresponding tridegree is
\((0,3,3)\), because \(\e(\beta_N)=0\).  Lemma~\ref{lem:reorientation} places the
unique top-annular class for \(O^S\) in tridegree
\[
  (-2\lambda,\ \e(\alpha)+3-6\lambda,\ 3).
\]
Equality with the target's tridegree first gives \(\lambda=0\), from the homological
grading, and then \(\e(\alpha)=0\), from the quantum grading.

Equation~\eqref{eq:jones-reorientation} and \(\lambda=0\) show that the Jones
polynomial in the braid orientation is still the polynomial in
\eqref{eq:jones-main}.  Theorem~\ref{thm:rigidity} therefore implies that \(\alpha\)
is conjugate to \(\beta_N\).  Conjugate braids have ambiently isotopic annular
closures, carrying braid orientation to braid orientation.  It follows that \(L\)
is obtained from \(K_N\) by reversing a union of components.

If \(3\nmid N\), then \(K_N\) is a knot, so this is either no reversal or overall
reversal.  If \(3\mid N\), it is an arbitrary component reorientation.  The converse
assertion in this case is Corollary~\ref{cor:target-reorientations}.
\end{proof}

\begin{proof}[Proof of Corollary~\ref{cor:annular-jones}]
Suppose that a braid-oriented closure \(\widehat\alpha\) has the same annular Jones
polynomial as \(K_N\).  Formula \eqref{eq:top-annular-term} shows first that
\(\alpha\) has three strands and then that \(\e(\alpha)=0\).  Specializing the
annular variable to one gives
\(V_{\widehat\alpha}(t)=V_{K_N}(t)\).  Theorem~\ref{thm:rigidity} now implies that
\(\alpha\) is conjugate to \(\beta_N\), so their annular closures are isotopic.
\end{proof}

\begin{remark}\label{rem:ordinary}
Theorem~\ref{thm:main} is an annular detection theorem.  It does not imply that
ordinary Khovanov homology detects the underlying weaving link \(W(3,N)\).  The
annular grading is used twice: its extremal rank forces a braid closure, and its
extremal tridegree recovers the reorientation linking sum and the exponent sum.
\end{remark}


\begin{thebibliography}{99}

\bibitem{APS}
M.~M. Asaeda, J.~H. Przytycki and A.~S. Sikora,
\emph{Categorification of the Kauffman bracket skein module of \(I\)-bundles over surfaces},
Algebr. Geom. Topol. \textbf{4} (2004), 1177--1210.

\bibitem{BaldwinDowlinLevineLidmanSazdanovic}
J.~A. Baldwin, N. Dowlin, A.~S. Levine, T. Lidman and R. Sazdanovic,
\emph{Khovanov homology detects the figure-eight knot},
Bull. Lond. Math. Soc. \textbf{53} (2021), no.~3, 871--876.

\bibitem{BaldwinSivek}
J.~A. Baldwin and S. Sivek,
\emph{Khovanov homology detects the trefoils},
Duke Math. J. \textbf{171} (2022), no.~4, 885--956.

\bibitem{BaldwinSivekXie}
J.~A. Baldwin, S. Sivek and Y. Xie,
\emph{Khovanov homology detects the Hopf links},
Math. Res. Lett. \textbf{26} (2019), no.~5, 1281--1290.

\bibitem{Binns}
F. Binns,
\emph{Closures of \(3\)-braids and detection},
Pacific J. Math. \textbf{340} (2026), 1--36.

\bibitem{Birman}
J.~S. Birman,
\emph{On the Jones polynomial of closed \(3\)-braids},
Invent. Math. \textbf{81} (1985), no.~2, 287--294.

\bibitem{Chbili}
N. Chbili,
\emph{A note on the Jones polynomials of \(3\)-braid links},
Siberian Math. J. \textbf{63} (2022), no.~5, 983--994.

\bibitem{GrigsbyLicataWehrli}
J.~E. Grigsby, A.~M. Licata and S.~M. Wehrli,
\emph{Annular Khovanov homology and knotted Schur--Weyl representations},
Compos. Math. \textbf{154} (2018), no.~3, 459--502.

\bibitem{GrigsbyNi}
J.~E. Grigsby and Y. Ni,
\emph{Sutured Khovanov homology distinguishes braids from other tangles},
Math. Res. Lett. \textbf{21} (2014), no.~6, 1263--1275.

\bibitem{GrigsbyWehrli}
J.~E. Grigsby and S.~M. Wehrli,
\emph{Khovanov homology, sutured Floer homology and annular links},
Algebr. Geom. Topol. \textbf{10} (2010), no.~4, 2009--2039.

\bibitem{Khovanov}
M. Khovanov,
\emph{A categorification of the Jones polynomial},
Duke Math. J. \textbf{101} (2000), no.~3, 359--426.

\bibitem{Kim}
J. Kim,
\emph{Annular links with \(\mathfrak{sl}_2\)-irreducible annular Khovanov homology},
Ph.D. thesis, California Institute of Technology, 2021,
{doi:10.7907/rwqc-q126}.

\bibitem{KronheimerMrowka}
P.~B. Kronheimer and T.~S. Mrowka,
\emph{Khovanov homology is an unknot-detector},
Publ. Math. Inst. Hautes \`Etudes Sci. \textbf{113} (2011), 97--208.

\bibitem{Martin}
G. Martin,
\emph{Annular Khovanov homology and meridional disks},
J. Knot Theory Ramifications \textbf{32} (2023), no.~2, Paper No.~2250088.

\bibitem{MishraStaffeldt}
R. Mishra and R. Staffeldt,
\emph{Polynomial invariants, knot homologies, and higher twist numbers
of weaving knots \(W(3,n)\)},
J. Knot Theory Ramifications \textbf{30} (2021), no.~4,
Paper No.~2150025, 57 pp.

\bibitem{Murasugi}
K. Murasugi,
\emph{On closed \(3\)-braids},
Mem. Amer. Math. Soc. \textbf{151} (1974).

\bibitem{Truol}
P. Tru\"ol,
\emph{The upsilon invariant at \(1\) of \(3\)-braid knots},
Algebr. Geom. Topol. \textbf{23} (2023), 3763--3804.

\bibitem{Xie}
Y. Xie,
\emph{Instantons and annular Khovanov homology},
Adv. Math. \textbf{388} (2021), Paper No.~107864.

\end{thebibliography}
\end{document}